\title{Higher-rank GBS groups: non-positive curvature and biautomaticity}
\author{Sam Shepherd}
\address[S.\ Shepherd]{Institut für Mathematische Logik und Grundlagenforschung, Fachbereich Mathematik und Informatik, Universität Münster, Einsteinstraße 62, 48149 Münster, Germany}
\email{sam.shepherd@uni-muenster.de}
\author{Motiejus Valiunas}
\address[M.\ Valiunas]{Instytut Matematyczny, Uniwersytet Wroc{\l}awski, plac Grunwaldzki 2, 50-384 Wroc{\l}aw, Poland}
\email{motiejus.valiunas@math.uni.wroc.pl}
\newcommand{\GLM}[1][n]{rank~$#1$ GBS}
\newcommand{\EE}{{\mathbb{E}}}
\newcommand{\FF}{{\mathbb{F}}}
\newcommand{\GG}{{\mathcal{G}}}
\newcommand{\LL}{{\mathcal{L}}}
\newcommand{\NN}{{\mathbb{N}}}
\newcommand{\QQ}{{\mathbb{Q}}}
\newcommand{\RR}{{\mathbb{R}}}
\newcommand{\TT}{{\mathcal{T}}}
\newcommand{\ZZ}{{\mathbb{Z}}}
\DeclareMathOperator{\AGL}{AGL}
\DeclareMathOperator{\Aut}{Aut}
\DeclareMathOperator{\Cay}{Cay}
\DeclareMathOperator{\CAT}{CAT}
\DeclareMathOperator{\Comm}{Comm}
\DeclareMathOperator{\GL}{GL}
\DeclareMathOperator{\id}{id}
\DeclareMathOperator{\OO}{O}
\theoremstyle{plain}
\newtheorem{thm}{Theorem}[section]
\newtheorem{lem}[thm]{Lemma}
\theoremstyle{definition}
\theoremstyle{remark}
\newtheorem{rem}[thm]{Remark}
\begin{document}

\begin{abstract}
    We characterise when a rank $n$ generalised Baumslag--Solitar group is CAT(0) and when it is biautomatic.
\end{abstract}
\maketitle

\section{Introduction}

Since the development of the theory of biautomatic groups in the late 1980s, the relationship between biautomaticity and various notions of non-positive curvature has been studied.  In particular, it had been open for a long time whether every $\CAT(0)$ group is biautomatic.  Recently, Leary and Minasyan \cite{LearyMinasyan} gave the first examples of CAT(0) groups that are not biautomatic.  Their examples can be characterised as commensurating HNN extensions of free abelian groups, and as part of their work, Leary and Minasyan gave precise criteria for such groups to be $\CAT(0)$ and to be biautomatic.

In this note, we generalise the results of Leary and Minasyan to all \emph{rank $n$ generalised Baumslag--Solitar groups} (or \emph{\GLM{} groups} for short)---fundamental groups of (finite) graphs of groups $\GG$ with all vertex and edge groups isomorphic to $\ZZ^n$ for a fixed $n \in \NN$.  (We call such a graph of groups $\GG$ a \emph{\GLM{} graph of groups}.)  
Rank 1 GBS groups are well studied, for instance from the point of view of splittings, quasi-isometries, automorphisms and commensurability \cite{Whyte01,Forester03,Forester06,Levitt07,Levitt15,CasalRuizKazachkovZakharov21}.
The general setting of \GLM{} groups has also been studied by several authors in recent years, from the perspective of actions on hyperbolic spaces in \cite{Button25} and the perspective of separability properties in \cite{LopezShepherd}.

The criteria for our theorem are based on the image of the \emph{modular homomorphism} $\Delta_\GG$ as a subgroup of the abstract commensurator $\Comm(H) \cong \GL_n(\QQ)$, where $H$ is a vertex group of $\GG$.  See Section~\ref{sec:prelim} for the precise terminology and notation.

\begin{thm} \label{thm:main}
    Let $\GG$ be a \GLM{} graph of groups, and let $\Delta_\GG \colon \pi_1(\GG) \to \GL_n(\QQ)$ be the modular homomorphism. Then,
    \begin{enumerate}[label=(\roman*)]
        \item \label{it:main-biaut} the following are equivalent:
        \begin{enumerate}[label=(\alph*)]
            \item \label{iit:main-biaut} $\pi_1(\GG)$ is biautomatic,
            \item \label{iit:main-vbiaut} $\pi_1(\GG)$ is virtually biautomatic,
            \item \label{iit:main-sbiaut} $\pi_1(\GG)$ is a subgroup of a biautomatic group,
            \item \label{iit:main-finite} the image of $\Delta_\GG$ is finite;
        \end{enumerate}
        \item \label{it:main-CAT0} $\pi_1(\GG)$ is $\CAT(0)$ $\Leftrightarrow$ the image of $\Delta_\GG$ is conjugate in $\GL_n(\RR)$ into $\OO(n)$.
    \end{enumerate}
\end{thm}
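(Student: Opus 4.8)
The plan is to study $\pi_1(\GG)$ through a geometric model. Write $D := \Delta_\GG(\pi_1(\GG)) \le \GL_n(\QQ)$ for the image of the modular homomorphism, and let $X_\GG$ denote the ``tree of flats'': the universal cover of the graph of spaces for $\GG$ in which each vertex group $\ZZ^n$ is realised by a flat $n$-torus, each edge group by a flat $n$-torus covering the two adjacent vertex tori, and each edge by the corresponding mapping cylinder. Thus $X_\GG$ fibres over the Bass--Serre tree $\TT$ of $\GG$ with flat $\RR^n$-fibres: the vertex groups translate their fibres, the edge inclusions glue fibres together by $\QQ$-linear maps, and $\pi_1(\GG)$ acts freely and cocompactly. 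The theorem then reduces to identifying which $\pi_1(\GG)$-equivariant metrics and combinatorial structures $X_\GG$ carries, and this is governed by $D$.

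For ``$\Leftarrow$'' in part~\ref{it:main-CAT0}, assume $D$ is conjugate into $\OO(n)$ and fix a $D$-invariant inner product on $\RR^n$. Propagating it along a spanning tree of $\GG$ and checking consistency across the remaining edges --- the obstruction around a loop $\gamma$ being exactly $\Delta_\GG(\gamma)$ acting on the inner product, now trivial --- produces compatible flat metrics making every edge gluing a local isometry. I would then verify that the resulting piecewise-Euclidean $X_\GG$ is locally $\CAT(0)$: it is a flat manifold off the branch loci, which lie along the vertex flats, and there the link of a point is the spherical join of a round sphere with a finite discrete set of points at pairwise distance $\pi$, hence $\CAT(1)$; so $\pi_1(\GG)$ acts geometrically on a $\CAT(0)$ space. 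For \ref{iit:main-finite}$\Rightarrow$\ref{iit:main-biaut} in part~\ref{it:main-biaut}: when $D$ is \emph{finite} it preserves, after conjugation, both a lattice and an inner product in $\RR^n$, and I would upgrade $X_\GG$ to a $\CAT(0)$ cube complex acted on geometrically by $\pi_1(\GG)$ --- via a Sageev-style wallspace whose walls in each fibre run through a $\pi_1(\GG)$-invariant, locally finite family of rational hyperplanes (finiteness of $D$ being exactly what keeps that family invariant and locally finite), with the tree $\TT$ contributing a further $1$-dimensional cubical factor; it helps to note first that $\ker\Delta_\GG$ has finite index and trivial modular image, so that its own tree of flats is a product $\RR^n \times \TT$ that is cubulated at once --- though one still needs $\pi_1(\GG)$ itself to be cubulated, since it is biautomaticity of $\pi_1(\GG)$, not merely of a finite-index subgroup, that is wanted. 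Then $\pi_1(\GG)$ is biautomatic by Niblo--Reeves, and \ref{iit:main-biaut}$\Rightarrow$\ref{iit:main-vbiaut}, \ref{iit:main-biaut}$\Rightarrow$\ref{iit:main-sbiaut} are immediate.

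For the converses I would argue by contraposition, using that both ``$D$ infinite'' and ``$D$ not conjugate into $\OO(n)$'' persist on passing to a finite-index subgroup of $\pi_1(\GG)$, since a homomorphism carries a finite-index subgroup onto a finite-index subgroup of its image. If $D$ is not conjugate into $\OO(n)$, i.e.\ is unbounded in $\GL_n(\RR)$, then some loop $\gamma$ has $\Delta_\GG(\gamma)$ with an eigenvalue off the unit circle or a non-trivial unipotent part; since $\gamma$ commensurates a vertex group $\ZZ^n \le \pi_1(\GG)$ inducing precisely the commensuration $\Delta_\GG(\gamma)$, I would track a carefully chosen element of that $\ZZ^n$ along the finite edge-path traced by $\gamma$ --- picking, at each depth, an element lying in the relevant edge group and bounding its $\pi_1(\GG)$-length --- so as to exhibit inside $\pi_1(\GG)$ either an at-least-exponentially distorted cyclic subgroup or a finitely generated virtually solvable, non-virtually-abelian, subgroup; each contradicts $\CAT(0)$-ness (respectively: abelian subgroups of $\CAT(0)$ groups are undistorted; the solvable subgroup theorem), which proves ``$\Rightarrow$'' in part~\ref{it:main-CAT0}. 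If merely $D$ is infinite then, $D$ being finitely generated and infinite and hence not a torsion group, some loop $\gamma$ commensurates a vertex group $\ZZ^n$ via a commensuration $\Delta_\GG(\gamma)$ of \emph{infinite} order; but no subgroup of a biautomatic group can contain an element commensurating a $\ZZ^n$-subgroup with infinite order --- the higher-rank analogue of the Leary--Minasyan obstruction, which refines the Gersten--Short theorem on polycyclic subgroups of biautomatic groups and which I would establish following \cite{LearyMinasyan}, via rationality of translation numbers together with control on commensurated abelian subgroups. Hence $\pi_1(\GG)$ embeds in no biautomatic group, giving \ref{iit:main-sbiaut}$\Rightarrow$\ref{iit:main-finite}, and the same statement for finite-index subgroups gives \ref{iit:main-vbiaut}$\Rightarrow$\ref{iit:main-finite}.

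I expect the main obstacle to be the converses --- concretely, carrying the Leary--Minasyan analysis over from a single commensurating HNN extension of $\ZZ^n$ to an arbitrary \GLM{} graph of groups. A loop now passes through a whole chain of distinct, pairwise-commensurable copies of $\ZZ^n$, and because the edge inclusions only commensurate rather than normalise there is no single conjugation relation to iterate; the work is in choosing the right elements at successive depths and controlling their lengths in $\pi_1(\GG)$, and --- on the biautomatic side --- in converting ``an element commensurating a $\ZZ^n$ with infinite order'' into a genuine obstruction to embedding in a biautomatic group. By contrast the positive directions --- the $\CAT(0)$ metrization of $X_\GG$ and its cubulation when $D$ is finite --- ought to be comparatively routine, bar the local geometry at the gluing loci.
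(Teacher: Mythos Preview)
Your outline is broadly sound, but it diverges from the paper's argument in almost every step, and in one place your route is genuinely riskier.

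\textbf{Where the paper is more efficient.} The unifying device in the paper is the affine homomorphism $\delta_\GG\colon \pi_1(\GG)\to \AGL_n(\QQ)$ of Lemma~\ref{lem:delta}, which packages your ``tree of flats'' algebraically. With it:
\begin{itemize}
\item For \ref{it:main-CAT0}~``$\Leftarrow$'', the paper simply conjugates $\delta_\GG$ into $\RR^n\rtimes\OO(n)$ and lets $\pi_1(\GG)$ act diagonally on $\EE^n\times\TT_\GG$; no link computations are needed.
\item For \ref{iit:main-finite}$\Rightarrow$\ref{iit:main-biaut}, the paper does \emph{not} cubulate. Lemma~\ref{lem:normal-action-trivial} gives a normal $\ZZ^n$ with virtually free quotient $F$, and $\delta_\GG$ has virtually abelian image $H$; the product map embeds $\pi_1(\GG)$ as a subdirect product of $H\times F$, which is biautomatic by \cite[Proposition~8.2]{LearyMinasyan}. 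This sidesteps your worry about cubulating $\pi_1(\GG)$ rather than a finite-index subgroup.
\item For \ref{iit:main-sbiaut}$\Rightarrow$\ref{iit:main-finite}, the paper just cites \cite[Theorem~1.2]{Valiunas}: a commensurated finitely generated abelian subgroup of a biautomatic group has finite image in $\Comm(H)$. That \emph{is} the ``higher-rank Leary--Minasyan obstruction'' you propose to establish; it already exists and applies to arbitrary ambient biautomatic groups, so no graph-of-groups bookkeeping is required.
\item For \ref{it:main-CAT0}~``$\Rightarrow$'', the paper invokes \cite[Theorem~6.4]{LearyMinasyan}: a $\CAT(0)$ action of $\pi_1(\GG)$ forces an invariant inner product on $G_v\otimes\QQ$, hence $D\subset\OO(n)$ up to conjugacy. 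One line.
\end{itemize}

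\textbf{Where your route is risky.} Your plan for \ref{it:main-CAT0}~``$\Rightarrow$'' via distortion or the solvable subgroup theorem is the weak point. First, ``at least exponentially distorted'' is too strong: when the spectral radius of $\Delta_\GG(\gamma)$ is a non-integer rational you typically only get polynomial distortion of $\ZZ^n$, and in the unipotent case you need a commutator (Heisenberg-type) trick rather than straight conjugation. Second, and more seriously, the obvious candidate $\langle \gamma, H\rangle$ need not be solvable in $\pi_1(\GG)$: the map $\delta_\GG$ can have large kernel (e.g.\ free groups coming from stable letters with trivial modular image), so knowing the image in $\AGL_n(\QQ)$ is metabelian tells you nothing about the subgroup itself. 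You can probably salvage the argument by showing the vertex $\ZZ^n$ is super-linearly distorted (which already contradicts the Flat Torus Theorem), but controlling the indices $[\gamma^k H\gamma^{-k}:\gamma^k H\gamma^{-k}\cap H]$ along the chain is exactly the bookkeeping you flag as the main obstacle---and the paper avoids it entirely via the inner-product theorem.

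In short: your geometric picture is correct, and each of your steps can likely be pushed through, but the paper's $\delta_\GG$ together with three off-the-shelf citations (\cite[Theorem~6.4, Proposition~8.2]{LearyMinasyan} and \cite[Theorem~1.2]{Valiunas}) replaces your cubulation, your link checks, and your distortion analysis with short arguments.
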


After writing the first version of this paper, we learnt that part \ref{it:main-CAT0} of Theorem \ref{thm:main} had been independently proved by Button and was due to appear in his upcoming publication \cite{Button25}.

Note that, as a consequence of Theorem~\ref{thm:main}, any biautomatic \GLM{} group is $\CAT(0)$, since any finite subgroup of $\GL_n(\RR)$ is conjugate to a subgroup of $\OO(n)$.  Indeed, if $G < \GL_n(\RR)$ is finite, then we can define a $G$-invariant inner product on $\RR^n$ by setting $\langle x,y \rangle \coloneqq \sum_{A \in G} \langle Ax,Ay \rangle_0$, where $\langle{-},{-}\rangle_0$ is the standard inner product on $\RR^n$; then any linear transformation mapping an orthonormal basis with respect to $\langle{-},{-}\rangle$ to the standard basis of $\RR^n$ will conjugate $G$ into $\OO(n)$.

In \cite{LopezShepherd}, Lopez de Gamiz Zearra and the first author characterised when a \GLM{} group is residually finite, subgroup separable and cyclic subgroup separable.  In particular, they showed that $\pi_1(\GG)$ is residually finite if and only if it is either an ascending HNN extension of $\ZZ^n$, or the image of $\Delta_\GG$ is conjugate in $\GL_n(\QQ)$ into $\GL_n(\ZZ)$. Consequently, if $\pi_1(\GG)$ is both $\CAT(0)$ and residually finite, then the image of $\Delta_\GG$ is conjugate into a discrete subgroup of $\OO(n)$, so the image is finite, implying that $\pi_1(\GG)$ is biautomatic.  Therefore, the question of whether every residually finite $\CAT(0)$ group is biautomatic \cite[Question~12.4]{LearyMinasyan} remains open.

In \cite{HughesValiunas,HughesValiunas2}, Hughes and the second author used a variation of the construction of \cite{LearyMinasyan} to give examples of groups that satisfy a variety of non-positive curvature properties (being $\CAT(0)$, hierarchically hyperbolic, injective and asynchronously automatic), yet fail to be biautomatic.

\subsection*{Acknowledgements}

The first author was supported by the CRC 1442 Geometry: Deformations and Rigidity, and by the Mathematics Münster Cluster of Excellence.
The second author was partially supported by the National Science Centre (Poland) grant No.\ 2022/47/D/ST1/00779.

\section{Preliminaries} \label{sec:prelim}

\subsection{Commensurators}\label{subsec:commensurators}

Given a group $H$, its \emph{abstract commensurator} is the group consisting of equivalence classes of isomorphisms between finite-index subgroups of $H$,
\[
\Comm(H) \coloneqq \{ \varphi\colon H' \xrightarrow{\simeq} H'' \mid H',H'' \leq H; [H:H'],[H:H''] < \infty \} \Big/ {\sim},
\]
where the equivalence relation $\sim$ is defined by saying $\varphi_1 \sim \varphi_2$ if $\varphi_1|_{H_0} = \varphi_2|_{H_0}$ for some finite-index subgroup $H_0 \leq H$. If $H \cong \ZZ^n$ for some $n \in \NN$, then $\Comm(H)$ can be identified with $\GL(H \otimes \QQ) \cong \GL_n(\QQ)$ by identifying $\varphi\colon H' \to H''$ with the isomorphism $i'' \circ (\varphi \otimes \QQ) \circ (i')^{-1}$, where isomorphisms $i'\colon H' \otimes \QQ \to H \otimes \QQ$ and $i''\colon H'' \otimes \QQ \to H \otimes \QQ$ are induced by inclusions $H' \to H$ and $H'' \to H$, respectively, and the isomorphism $\varphi \otimes \QQ \colon H' \otimes \QQ \to H'' \otimes \QQ$ is induced by $\varphi$.

Given a group $G$ and a subgroup $H \leq G$, the \emph{commensurator} of $H$ in $G$ is the subgroup
\[
\Comm_G(H) \coloneqq \{ g \in G \mid [H:H \cap g^{\varepsilon}Hg^{-\varepsilon}] < \infty \text{ for } \varepsilon = \pm 1 \} \leq G.
\]
We say that $H$ is \emph{commensurated} in $G$ if $\Comm_H(G) = G$. Note that there exists a canonical map
\begin{align*}
\Phi = \Phi_{G,H}\colon \Comm_G(H) &\to \Comm(H), \\
g &\mapsto [\varphi_g],
\end{align*}
where $\varphi_g\colon H \cap g^{-1}Hg \to H \cap gHg^{-1}$ is defined by $\varphi_g(h) = ghg^{-1}$.

\subsection{Graphs of groups}

A \emph{graph of groups} $\GG$ is a finite connected undirected graph $|\GG|$ together with a collection of \emph{vertex groups} $\{ G_v \mid v \in V\GG \}$, a collection of \emph{edge groups} $\{ G_e \mid e \in E^+\GG \}$ with identifications $G_e = G_{\overline{e}}$ for all $e \in E^+\GG$, and injective group homomorphisms $\{ \iota_e\colon G_e \to G_{i(e)} \mid e \in E^+\GG \}$. Here we write $V\GG$ for the set of vertices of $|\GG|$, $E^+\GG$ for the set of oriented edges in $|\GG|$, and $i(e) \in V\GG$ (respectively $\overline{e} \in E^+\GG$) for the initial vertex (respectively the opposite edge) of $e \in E^+\GG$.

Now, let $\GG$ be a graph of groups, and $T \subseteq |\GG|$ a maximal subtree. The \emph{fundamental group of $\GG$} (with respect to $T$), denoted $\pi_1(\GG,T)$, is a group obtained from the free product $*_{v \in V\GG} G_v$ by adding
\begin{itemize}
    \item generators $\{t_e\mid e\in E^+\GG\}$;
    \item relations of the form $t_e \iota_e(g) t_{\overline{e}} = \iota_{\overline{e}}(g)$ for all $e \in E^+\GG$ and $g \in G_e$ (note that this includes relations $t_e t_{\overline{e}} =1$) and of the form $t_e = 1$ for all $e \in E^+T$.
\end{itemize}
In particular, if the vertex groups $G_v = \langle X_v \mid R_v \rangle$ are finitely presented and the edge groups $G_e = \langle X_e \rangle$ are finitely generated, then $\pi_1(\GG,T)$ has a finite presentation with
\begin{itemize}
    \item generators $E^+\GG \sqcup \bigsqcup_{v \in V\GG} X_v$;
    \item relations of types:
    \begin{itemize}
        \item $t_e w_x t_{\overline{e}} = \overline{w}_x$ for $e \in E^+\GG$ and $x \in X_e$, where $w_x$ (respectively $\overline{w}_x$) is a word over $X_{i(e)}$ (respectively $X_{i(\overline{e})}$) representing $\iota_e(x)$ (respectively $\iota_{\overline{e}}(x)$);
        \item $t_e = 1$ for $e \in E^+T$, and $t_et_{\overline{e}} = 1$ for $e \in E^+\GG$;
        \item $r_v = 1$ for $v \in V\GG$ and $r_v \in R_v$;
    \end{itemize}
\end{itemize}
we call this a \emph{standard presentation} of $\pi_1(\GG,T)$ associated to the presentations $\langle X_v \mid R_v \rangle$ and the generating sets $X_e$. The group $\pi_1(\GG,T)$ does not depend (up to isomorphism) on the choice of $T$ and can be therefore simply denoted by $\pi_1(\GG)$.

The vertex and edge groups of $\GG$ can be canonically identified with subgroups of $\pi_1(\GG)$. The group $\pi_1(\GG)$ acts on its \emph{Bass--Serre tree}, $\TT = \TT_\GG$, such that the vertex (respectively edge) stabilisers under this action are precisely the conjugates of vertex (respectively edge) groups of $\GG$. Moreover, the quotient graph $\TT_\GG / \pi_1(\GG)$ is isomorphic to $|\GG|$.

We say an edge $e \in E^+\GG$ is \emph{ascending} if $\iota_e$ is surjective, and \emph{strictly ascending} if moreover $\iota_{\overline{e}}$ is not surjective. We say $\GG$ is \emph{reduced} if every ascending edge $e \in E^+\GG$ is a loop, i.e.\ satisfies $i(e) = i(\overline{e})$. If $\GG$ is not reduced, then it can be transformed to a reduced graph of groups $\GG'$ by a sequence of \emph{collapse moves}, see \cite{Forester}. Such a transformation does not change $\pi_1(\GG)$ (up to isomorphism) or the set of stabilisers of vertices and ``non-collapsed'' edges under the action on $\TT_\GG$, and the trees $\TT_\GG$ and $\TT_{\GG'}$ are equivariantly quasi-isometric.

\subsection{Commensurating graphs of groups}

We say a graph of groups $\GG$ is \emph{commensurating} if we have $[G_{i(e)} : \iota_e(G_e)] < \infty$ for every $e \in E^+\GG$---or equivalently, if the Bass--Serre tree $\TT_\GG$ is locally finite. In this case, it is easy to see that for any vertex $v \in V\GG$ (or edge $e \in E^+\GG$), the group $H = G_v$ (or $H = G_e$) is commensurated in $G = \pi_1(\GG)$, and the map $\Phi_{G,H}$ defined above is called a \emph{modular homomorphism} for $\GG$ and denoted by $\Delta_{\GG,H}$. Moreover, every vertex and edge group $H'$ of $\GG$ shares a finite-index subgroup with $H$, implying that there exists an isomorphism $\gamma\colon \Comm(H) \to \Comm(H')$ satisfying $\Delta_{\GG,H'} = \gamma \circ \Delta_{\GG,H}$. Therefore, up to such an isomorphism $\gamma$, the modular homomorphism $\Delta_{\GG,H}$ does not depend on the choice of $H$, and we write $\Delta_\GG$ for this map.

It is easy to see that the map $\Delta_\GG$ is unaffected by the collapse moves: that is, if a graph of groups $\GG'$ is obtained from $\GG$ by a sequence of collapse moves, then the canonical isomorphism $\psi\colon \pi_1(\GG') \to \pi_1(\GG)$ satisfies $\Delta_{\GG'} = \Delta_\GG \circ \psi$.

Given $n \in \NN$, we say that a graph of groups $\GG$ is a \emph{\GLM{} graph of groups}, and that $\pi_1(\GG)$ is a \emph{\GLM{} group}, if $G_v \cong \ZZ^n$ for all $v \in V\GG$ and $G_e \cong \ZZ^n$ for all $e \in E^+\GG$. Note that such $\GG$ must be commensurating, since all subgroups of $\ZZ^n$ isomorphic to $\ZZ^n$ have finite index. In this case, since $\Comm(\ZZ^n) \cong \GL_n(\QQ)$, the modular homomorphism is a map $\Delta_\GG\colon \pi_1(\GG) \to \GL_n(\QQ)$, which is uniquely defined up to conjugation in $\GL_n(\QQ)$.

For a field $\FF$ and $n \in \NN$, we write $\AGL_n(\FF)$ for the group of affine linear transformations of $\FF^n$, i.e.\ the semidirect product $\FF^n \rtimes \GL_n(\FF)$, with the subgroups $\FF^n$ and $\GL_n(\FF)$ acting by translations and linear transformations, respectively. We will make use of the following group homomorphism.

\begin{lem} \label{lem:delta}
    Given a \GLM{} graph of groups $\GG$, there exists a group homomorphism
    \[
    \delta_\GG \colon \pi_1(\GG) \to \AGL_n(\QQ),
    \]
    that is injective on the vertex groups of $\GG$, such that the images of the vertex groups of $\GG$ are contained in the group of translations $\QQ^n$, and such that the composite of $\delta_\GG$ with the quotient map $\AGL_n(\QQ) \to \GL_n(\QQ)$ coincides with $\Delta_\GG \colon \pi_1(\GG) \to \GL_n(\QQ)$ (up to conjugacy in $\GL_n(\QQ)$).
\end{lem}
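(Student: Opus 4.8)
The plan is to construct $\delta_\GG$ by hand on a standard presentation of $\pi_1(\GG,T)$ for a fixed maximal subtree $T\subseteq|\GG|$, and then to check it has the three required properties. Fix a base vertex $v_0\in V\GG$, set $H:=G_{v_0}$ and $V:=H\otimes\QQ\cong\QQ^n$, and use the standard identification $\Comm(H)\cong\GL(V)$. The aim is that the composite of $\delta_\GG$ with the projection $\AGL_n(\QQ)\to\GL_n(\QQ)$ equals $\Delta_{\GG,H}=\Phi_{G,H}$ exactly, which is a valid representative of $\Delta_\GG$ (as the latter is only defined up to conjugacy, this suffices).

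First I would produce, for every vertex $v\in V\GG$, a canonical isomorphism $\psi_v\colon G_v\otimes\QQ\xrightarrow{\ \simeq\ }V$ with $\psi_{v_0}=\id_V$. For an edge $e\in E^+T$ with $i(e)=u$ and $i(\overline{e})=u'$, the relations of $\pi_1(\GG,T)$ force $t_e=t_{\overline{e}}=1$, hence $\iota_e(g)=\iota_{\overline{e}}(g)$ in $G$ for all $g\in G_e$; since $\GG$ is commensurating, $\iota_e$ and $\iota_{\overline{e}}$ have finite-index image, so $\iota_e\otimes\QQ$ and $\iota_{\overline{e}}\otimes\QQ$ are isomorphisms onto $G_u\otimes\QQ$ and $G_{u'}\otimes\QQ$, and $(\iota_e\otimes\QQ)\circ(\iota_{\overline{e}}\otimes\QQ)^{-1}\colon G_{u'}\otimes\QQ\xrightarrow{\simeq}G_u\otimes\QQ$ is canonical. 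Composing these along the unique path in $T$ from $v_0$ to $v$ yields $\psi_v$, which is well defined precisely because $T$ is a tree (the isomorphisms attached to $e$ and $\overline{e}$ are mutually inverse, and there are no cycles). For an arbitrary edge $e\in E^+\GG$ with $i(e)=u$, $i(\overline{e})=u'$, write $\bar\iota_e:=\psi_u\circ(\iota_e\otimes\QQ)\colon G_e\otimes\QQ\xrightarrow{\simeq}V$ and $M_e:=\bar\iota_{\overline{e}}\circ\bar\iota_e^{-1}\in\GL(V)$; then $M_{\overline{e}}=M_e^{-1}$, and the construction of the maps $\psi_v$ is arranged exactly so that $M_e=\id_V$ whenever $e\in E^+T$.

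Then I would define $\delta_\GG$ by sending each $h\in G_v$ to translation by $\psi_v(h\otimes1)\in V$ and each stable letter $t_e$ to $(0,M_e)\in\QQ^n\rtimes\GL_n(\QQ)=\AGL_n(\QQ)$, and check that this respects the defining relations of $\pi_1(\GG,T)$: the vertex-group relations hold because each $\psi_v$ is a homomorphism; the relations involving only stable letters hold because $\delta_\GG(t_e)=\id$ for $e\in E^+T$ and $\delta_\GG(t_{\overline{e}})=\delta_\GG(t_e)^{-1}$ in general; and the edge relations $t_e\iota_e(g)t_{\overline{e}}=\iota_{\overline{e}}(g)$ hold by the one-line computation $(0,M_e)(\bar\iota_e(g),\id)(0,M_e^{-1})=(M_e\bar\iota_e(g),\id)=(\bar\iota_{\overline{e}}(g),\id)$, using $M_e\circ\bar\iota_e=\bar\iota_{\overline{e}}$ (here $\bar\iota_e(g)$ abbreviates $\bar\iota_e(g\otimes1)$). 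Thus $\delta_\GG$ is a well-defined homomorphism; it is injective on each $G_v$ because $\psi_v$ is an isomorphism and $G_v\hookrightarrow G_v\otimes\QQ$, and it sends each $G_v$ into the translation subgroup $\QQ^n$ by construction.

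Finally, to identify the linear part $\bar\delta_\GG\colon G\to\GL_n(\QQ)$ of $\delta_\GG$ with $\Delta_\GG$, I would use that $\psi_{v_0}=\id_V$, so $\delta_\GG(h)=(h\otimes1,\id)$ for $h\in H$. For $g\in G$ and $h\in H\cap g^{-1}Hg$, writing $\delta_\GG(g)=(c,\bar\delta_\GG(g))$ and comparing
\[
\delta_\GG(ghg^{-1})=\bigl((ghg^{-1})\otimes1,\ \id\bigr)
\quad\text{with}\quad
\delta_\GG(g)\delta_\GG(h)\delta_\GG(g)^{-1}=\bigl(\bar\delta_\GG(g)(h\otimes1),\ \id\bigr)
\]
gives $\bar\delta_\GG(g)(h\otimes1)=(ghg^{-1})\otimes1$. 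Since $\GG$ is commensurating, $H\cap g^{-1}Hg$ has finite index in $H$, so the vectors $h\otimes1$ span $V$; hence $\bar\delta_\GG(g)$ is forced to be the isomorphism $i''\circ(\varphi_g\otimes\QQ)\circ(i')^{-1}$ of the standard identification, i.e.\ $\bar\delta_\GG(g)=\Phi_{G,H}(g)=\Delta_\GG(g)$. The one point genuinely needing care is the consistency of the isomorphisms $\psi_v$ and the resulting identity $M_e=\id_V$ for tree edges, which is what makes $\delta_\GG$ well defined; granting that, the remaining steps are routine bookkeeping.
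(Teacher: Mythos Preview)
Your proof is correct and follows essentially the same strategy as the paper: define $\delta_\GG$ on the generators of a standard presentation---sending vertex-group elements to translations via a rationalisation, and stable letters to linear parts---then verify the defining relations. The only differences are presentational. The paper fixes a single vertex $v$, uses that $G_v \cap G_w$ has finite index in $G_v$ inside $\pi_1(\GG)$ to define $\delta_\GG(g) = (\tfrac{1}{m}\psi_v(g^m),\id)$ for $g \in G_w$ directly via powers, and sets $\delta_\GG(t_e) = (0,\Delta_\GG(t_e))$ outright, so the third property is immediate by construction; you instead build explicit isomorphisms $\psi_w\colon G_w\otimes\QQ\to V$ along tree paths, define $M_e$ from the edge maps, and verify $\bar\delta_\GG=\Delta_\GG$ a posteriori via conjugation. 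Your route is slightly more bookkeeping but arguably more self-contained, as it does not rely on first identifying vertex groups with their images in $\pi_1(\GG)$ and observing their commensurability there.
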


\begin{proof}
    Fix $v \in V\GG$ and an isomorphism $\psi_v \colon G_v \to \ZZ^n$, which is also used to induce an isomorphism $\Comm(G_v) \cong \GL_n(\QQ)$. This allows us to view $\Delta_\GG = \Delta_{\GG,G_v}$ as a homomorphism $\pi_1(\GG) \to \GL_n(\QQ)$, so that for each $h \in \pi_1(\GG)$, the matrix $\Delta_{\GG,G_v}(h) \in \GL_n(\QQ)$ sends $\psi_v(g)$ to $\psi_v(hgh^{-1})$ for any $g \in G_v \cap h^{-1}G_vh$.
    
    Given any $w \in V\GG$, the subgroup $G_v \cap G_w$ has finite index in $G_v$, and therefore for any $g \in G_w$ we have $g^m \in G_v$ for some $m = m(g) \geq 1$. We then define $\delta_\GG$ on $G_w$ by setting $\delta_\GG(g) = (\frac{1}{m(g)}\psi_v(g^{m(g)}),\id)$; note that, since $\QQ^n$ is uniquely divisible, this definition is independent of the value of $m(g)$ chosen. Given $e \in E^+\GG$, we also set $\delta_\GG(t_e) = (0,\Delta_{\GG,G_v}(t_e))$.  It is then clear, by construction, that $\delta_\GG$ is injective on the vertex groups and sends them into the translation subgroup $\QQ^n$, and that the image of $\delta_\GG(\pi_1(\GG))$ under the quotient map $\AGL_n(\QQ) \to \GL_n(\QQ)$ is $\Delta_\GG(\pi_1(\GG))$.  It remains to check that $\delta_\GG$ extends to a group homomorphism.

    The relations of the form $t_e = 1$ (for $e \in E^+T$) are clearly preserved by $\delta_\GG$, since $\Delta_\GG(t_e) = \Delta_\GG(1) = \id$. On the other hand, given any $e \in E^+\GG$ and $g \in G_e$, there exists a constant $m' \geq 1$ such that $\iota_e(g)^{m'},\iota_{\overline{e}}(g)^{m'} \in G_v$, and therefore
    \begin{align*}
    \delta_\GG(t_e) \cdot \delta_\GG(\iota_e(g)) \cdot \delta_\GG(t_{\overline{e}})
    &= \textstyle (0,\Delta_\GG(t_e)) \cdot (\frac{1}{m'}\psi_v(\iota_e(g^{m'})),\id) \cdot (0,\Delta_\GG(t_{\overline{e}})) \\
    &= \textstyle (\frac{1}{m'}\Delta_\GG(t_e) \cdot \psi_v(\iota_e(g^{m'})),\Delta_\GG(t_e)\Delta_\GG(t_{\overline{e}})) \\
    &= \textstyle (\frac{1}{m'} \psi_v(t_e\iota_e(g^{m'})t_e^{-1}),\Delta_\GG(t_et_{\overline{e}})) \\
    &= \textstyle (\frac{1}{m'} \psi_v(\iota_{\overline{e}}(g^{m'})),\id)\\
    &= \delta_\GG(\iota_{\overline{e}}(g)),
    \end{align*}
    implying that $\delta_\GG$ preserves the relation $t_e\iota_e(g)t_{\overline{e}} = \iota_{\overline{e}}(g)$ as well.  Thus $\delta_\GG$ extends to a homomorphism, as needed.    
\end{proof}

\subsection{CAT(0) spaces and groups}

Recall that given a geodesic triangle $\Delta$ in a geodesic metric space $X$, a \emph{comparison triangle} $\overline\Delta$ is a geodesic triangle on the Euclidean plane $\EE^2$ having the same edge lengths as $\Delta$.  In this situation, there exists a continuous map $f_\Delta\colon \Delta \to \overline\Delta$ that is an isometry on each of the edges.  We say that the space $X$ is \emph{CAT(0)} if for any geodesic triangle $\Delta$ in $X$ and any $x,y \in \Delta$, we have $d_X(x,y) \leq d_{\EE^2}(f_\Delta(x),f_\Delta(y))$. In this note, we are only using the fact that the Cartesian product $\EE^n \times T$ with the product metric is $\CAT(0)$ when $T$ is a simplicial tree.

We say that a finitely generated group $G$ is \emph{CAT(0)} if it acts properly and cocompactly on a $\CAT(0)$ space $X$ by semisimple isometries.

\subsection{Biautomatic groups}

Here we briefly recall the definition of biautomaticity.  We will not use this definition directly, instead relying on known results.

Let $G$ be a group with a finite generating set $X \subseteq G$ such that $1 \in X = X^{-1}$, and write $X^*$ for the free monoid on $X$. Given $\LL \subseteq X^*$, we say that $(X,\LL)$ is a \emph{biautomatic structure} on $G$ if
\begin{itemize}
    \item $\LL$ is a \emph{regular language}, i.e.\ recognised by a finite state automaton over $X$;
    \item the function $\pi|_\LL$ is finite-to-one and surjective, where $\pi\colon X^* \to G$ is the canonical surjection;
    \item $(X,\LL)$ satisfies the \emph{two-sided fellow traveller property}: there exists a constant $\kappa \geq 0$ such that for any $x,y \in X$ and $u,v \in \LL$ satisfying $xu = vy$ in $G$, we have $d_{\Cay(G,X)}(xu_t,v_t) \leq \kappa$ for all $t \in \NN$, where given $w \in X^*$ we write $w_t$ for the initial subword of $w$ of length $t$ (or the whole of $w$ if $t > |w|$).
\end{itemize}
We say that $G$ is \emph{biautomatic} if it has a biautomatic structure.

Note that the definition given here differs slightly from the definition appearing usually in the literature, as we require the map $\pi|_\LL$ to be finite-to-one.  However, as pointed out in \cite{Amrhein}, such a restriction is necessary in order to satisfy the original definition of biautomatic groups in terms of multiplier automata \cite[\S2]{Epstein}, whereas we lose no generality by assuming $\pi|_\LL$ to be finite-to-one since any biautomatic group $G$ has a biautomatic structure $(X,\LL')$ with $\pi|_{\LL'}$ bijective \cite[Theorem~2.5.1]{Epstein}.

\section{Biautomaticity}

In the case $\Delta_\GG$ has finite image, the following will be used to show that $\pi_1(\GG)$ is biautomatic.

\begin{lem} \label{lem:normal-action-trivial}
    Let $\GG$ be a \GLM{} graph of groups, and suppose that $\Delta_\GG$ has finite image. Then $\pi_1(\GG)$ has a normal subgroup isomorphic to $\ZZ^n$ acting trivially on $\TT_\GG$.
\end{lem}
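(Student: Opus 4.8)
The plan is to show that the kernel $N \coloneqq \ker\!\big(\pi_1(\GG) \to \Aut(\TT_\GG)\big)$ of the action on the Bass--Serre tree is itself isomorphic to $\ZZ^n$; it is automatically normal and acts trivially on $\TT_\GG$, so this will suffice. Write $G \coloneqq \pi_1(\GG)$, fix a vertex $v \in V\GG$ with a lift $\tilde v \in \TT_\GG$, and note that every element of $N$ fixes $\tilde v$ and hence lies in $\mathrm{Stab}_G(\tilde v) = G_v \cong \ZZ^n$. Thus $N$ is free abelian of rank at most $n$, and the task reduces to showing $[G_v : N] < \infty$.

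For this I would exploit the homomorphism $\delta_\GG \colon G \to \AGL_n(\QQ) = \QQ^n \rtimes \GL_n(\QQ)$ of Lemma~\ref{lem:delta} together with the hypothesis that $F \coloneqq \Delta_\GG(G) \leq \GL_n(\QQ)$ is finite. Consider the subgroup
\[
\Lambda \coloneqq \bigcap_{B \in F}\ \bigcap_{f \in E^+\GG} B \cdot \delta_\GG(\iota_f(G_f)) \ \leq\ \QQ^n .
\]
Each $\iota_f(G_f)$ is isomorphic to $\ZZ^n$, and since $\delta_\GG$ is injective on vertex groups with image in the translation subgroup $\QQ^n$, each $\delta_\GG(\iota_f(G_f))$ is a subgroup of $\QQ^n$ isomorphic to $\ZZ^n$; applying any $B \in F \leq \GL_n(\QQ)$ preserves this. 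Any two subgroups of $\QQ^n$ isomorphic to $\ZZ^n$ are commensurable, and both $F$ and $E^+\GG$ are finite, so $\Lambda$ is again a subgroup of $\QQ^n$ isomorphic to $\ZZ^n$ and commensurable with $\delta_\GG(G_v)$. Hence $P \coloneqq \{ g \in G_v : \delta_\GG(g) \in \Lambda \}$ --- which via the injection $\delta_\GG|_{G_v}$ is isomorphic to $\delta_\GG(G_v) \cap \Lambda$ --- has finite index in $G_v$.

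The heart of the argument is to show $P \leq N$, i.e.\ that every $g \in P$ fixes $\TT_\GG$ pointwise; I would do this by induction on $d(\tilde v, \tilde w)$, the case $d = 0$ being immediate from $g \in G_v$. For the inductive step, let $\tilde w'$ be a vertex at distance $d+1$ from $\tilde v$, let $\tilde w$ be its neighbour at distance $d$ (fixed by $g$ by induction), and let $\tilde e$ be the edge between them. Bass--Serre theory gives $\mathrm{Stab}_G(\tilde w) = h G_u h^{-1}$ for some $h \in G$ and $u \in V\GG$; moreover, since $G_u$ is abelian, all edges at $\tilde w$ lying over a given oriented edge $f \in E^+\GG$ with $i(f) = u$ have the same stabiliser, whence $\mathrm{Stab}_G(\tilde e) = h\,\iota_f(G_f)\,h^{-1}$ for the appropriate such $f$. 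Now $h^{-1}gh \in G_u$, and conjugating the translation $\delta_\GG(g) \in \QQ^n$ by $\delta_\GG(h)$, whose linear part $B \coloneqq \Delta_\GG(h)$ lies in $F$, yields the translation $\delta_\GG(h^{-1}gh) = B^{-1}\delta_\GG(g)$; since $\delta_\GG(g) \in \Lambda \subseteq B \cdot \delta_\GG(\iota_f(G_f))$, this lies in $\delta_\GG(\iota_f(G_f))$. As $\delta_\GG$ is injective on $G_u \supseteq \iota_f(G_f)$, we conclude $h^{-1}gh \in \iota_f(G_f)$, so that $g \in \mathrm{Stab}_G(\tilde e)$; thus $g$ fixes $\tilde e$ and hence $\tilde w'$, completing the induction.

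Putting these together, $P \leq N \leq G_v$ with $[G_v : P] < \infty$ forces $[G_v : N] < \infty$, so $N$ is a free abelian subgroup of rank exactly $n$ of $G_v \cong \ZZ^n$; being the kernel of the $G$-action on $\TT_\GG$ it is normal and acts trivially, as required. The step I expect to be the main obstacle (or at least the most delicate bookkeeping) is the identification $\mathrm{Stab}_G(\tilde e) = h\,\iota_f(G_f)\,h^{-1}$ in the inductive step: this is precisely where the abelianness of the vertex groups is used --- to guarantee that all edges through a given vertex lying over a common edge of $\GG$ share one stabiliser --- and it is what makes the affine computation with $\delta_\GG$ go through uniformly over all of $\TT_\GG$.
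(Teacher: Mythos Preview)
Your argument is correct, and it takes a genuinely different route from the paper's proof. The paper does not argue directly on the Bass--Serre tree; instead it observes that a finite subgroup $F \leq \GL_n(\QQ)$ preserves the lattice $\bigcap_{A \in F} A(\ZZ^n)$ and is therefore conjugate into $\GL_n(\ZZ)$, and then appeals to \cite[Proposition~2.2]{LopezShepherd}, which says that this conjugacy condition implies the existence of the desired normal $\ZZ^n$. Your proof, by contrast, is self-contained: you use the affine map $\delta_\GG$ of Lemma~\ref{lem:delta} to build an explicit finite-index subgroup $P \leq G_v$ and then verify by induction on distance in $\TT_\GG$ that $P$ lies in the kernel of the action. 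In effect you are re-proving the relevant special case of the cited proposition by hand; the finite intersection defining your $\Lambda$ plays the same role as the invariant lattice in the paper's argument, but you push it through the tree directly rather than outsourcing that step. What the paper's approach buys is brevity; what yours buys is independence from the external reference and the extra information that the kernel itself already has rank~$n$. The bookkeeping you flagged as delicate---the identification $\mathrm{Stab}_G(\tilde e) = h\,\iota_f(G_f)\,h^{-1}$---is fine: the edges at $\tilde w$ lying over a fixed $f$ form a single $\mathrm{Stab}_G(\tilde w)$-orbit, so their stabilisers are all $hG_uh^{-1}$-conjugate, hence equal since $G_u$ is abelian, and any choice of $h$ with $hG_uh^{-1} = \mathrm{Stab}_G(\tilde w)$ gives the same subgroup $h\,\iota_f(G_f)\,h^{-1}$.
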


\begin{proof}
    By \cite[Proposition~2.4]{LopezShepherd}, it is enough to show that $G \coloneqq \Delta_\GG(\pi_1(\GG)) < \GL_n(\QQ)$ is conjugate to a subgroup of $\GL_n(\ZZ)$.  But indeed, consider the lattice $H_0 = \ZZ^n < \QQ^n$ (where by \emph{lattice} we mean a subgroup isomorphic to $\ZZ^n$, i.e.\ a discrete subgroup of $\QQ^n$ that spans $\QQ^n$ over $\QQ$). Then $H \coloneqq \bigcap_{A \in G} A(H_0)$ is a lattice in $\QQ^n$ (since any intersection of two lattices is a lattice and since $G$ is finite), and by construction, $H$ is $G$-invariant. Any linear transformation mapping a basis of $H$ (as a free abelian group) to the standard basis of $\QQ^n$ will then conjugate $G$ to a subgroup of $\GL_n(\ZZ)$, as required.
\end{proof}

\begin{proof}[Proof of Theorem~\ref{thm:main}\ref{it:main-biaut}]
    The implication \ref{iit:main-biaut} $\Rightarrow$ \ref{iit:main-sbiaut} is trivial.  It is thus enough to show the implications \ref{iit:main-sbiaut} $\Rightarrow$ \ref{iit:main-finite} $\Rightarrow$ \ref{iit:main-biaut} and \ref{iit:main-biaut} $\Leftrightarrow$ \ref{iit:main-vbiaut}.

    Suppose first that $G = \pi_1(\GG)$ is a subgroup of a biautomatic group $G'$, and let $H \leq G$ be a vertex group of $\GG$.  Then $H$ is a finitely generated abelian group that is commensurated in $G$, and therefore $G \leq \Comm_{G'}(H)$. By \cite[Theorem~1.2]{Valiunas}, the image of $\Phi\colon \Comm_{G'}(H) \to \Comm(H) \cong \GL_n(\QQ)$ is finite. Since the modular homomorphism $\Delta_\GG$ is precisely the restriction of $\Phi$ to $G$, it follows that the image of $\Delta_\GG$ is finite as well.  This shows \ref{iit:main-sbiaut} $\Rightarrow$ \ref{iit:main-finite}.

    Suppose now that $\Delta_\GG$ has finite image, and let $N \unlhd \pi_1(\GG)$ be the normal subgroup acting trivially on $\TT_\GG$, given by Lemma~\ref{lem:normal-action-trivial}. We then have an induced action of $F = \pi_1(\GG)/N$ on $\TT_\GG$. Since $N$ has finite index in every vertex stabiliser under the $\pi_1(\GG)$-action, the action of $F$ has finite vertex stabilisers; we also have that the $F$-action is cocompact since the $\pi_1(\GG)$-action is.  It follows that $F$ is virtually free \cite[Corollary~IV.1.9]{DicksDunwoody}, in particular word-hyperbolic and thus biautomatic \cite[Theorem~3.4.5]{Epstein}.

    Now consider the homomorphism $\delta_\GG\colon \pi_1(\GG) \to \AGL_n(\QQ)$ given by Lemma~\ref{lem:delta}.  By construction, $\delta_\GG$ is injective on $N$, whereas the image of $\delta_\GG$ is virtually abelian (since the fact that $\Delta_\GG$ has finite image means a finite-index subgroup of the image of $\delta_\GG$ is contained in the translation subgroup $\QQ^n$ of $\AGL_n(\QQ)$). Consider the homomorphism $\Psi\colon \pi_1(\GG) \to H \times F$ given by $\Psi(g) = (\delta_\GG(g),gN)$, where $H$ is the image of $\delta_\GG$.  Since $\ker(\delta_\GG) \cap N = \{1\}$ it follows that $\Psi$ is injective, and it is also clear that the image of $\Psi$ surjects onto each factor.  It follows that $\pi_1(\GG)$ is isomorphic to a subdirect product of $H \times F$, where $H$ is virtually abelian and $F$ is biautomatic. Thus $\pi_1(\GG)$ is biautomatic by \cite[Proposition~8.2]{LearyMinasyan}; this proves \ref{iit:main-finite} $\Rightarrow$ \ref{iit:main-biaut}.

    Finally, note that if $G' \leq \pi_1(\GG)$ is a finite-index subgroup, then the action of $G'$ on $\TT_\GG$ gives a description of $G'$ as a fundamental group of a \GLM{} graph of groups $\GG'$.  This description is consistent with the modular homomorphisms, i.e.\ the restriction $\Delta_\GG|_{G'}$ coincides with $\Delta_{\GG'}$ (up to conjugacy in $\GL_n(\QQ)$); it follows that the image of $\Delta_{\GG'}$ is (conjugate to) a finite-index subgroup of the image of $\Delta_\GG$.  In particular, $\Delta_{\GG'}$ has finite image if and only if $\Delta_\GG$ does; by the equivalence of \ref{iit:main-biaut} and \ref{iit:main-finite} shown above, it then follows that \ref{iit:main-biaut} $\Leftrightarrow$ \ref{iit:main-vbiaut}, as required.
\end{proof}

\begin{rem} \label{rem:using-LM-paper}
    One can show the equivalence of parts \ref{iit:main-biaut}, \ref{iit:main-vbiaut} and \ref{iit:main-finite} in Theorem~\ref{thm:main}\ref{it:main-biaut} directly from \cite[Theorem~1.2]{LearyMinasyan}, without using the full power of its generalisation \cite[Theorem~1.2]{Valiunas}.  Indeed, in order to show the implication \ref{iit:main-biaut} $\Rightarrow$ \ref{iit:main-finite}, one may notice that for any reduced \GLM{} graph of groups $\GG$:
    \begin{itemize}
        \item either $\GG$ has a strictly ascending loop, in which case one can show that the Dehn function for $\pi_1(\GG)$ is exponential---and thus $\pi_1(\GG)$ is not biautomatic, not even combable \cite[Theorem~3.6.6]{Epstein};
        \item or $\GG$ has a non-ascending edge $e$ such that $\overline{e}$ is also non-ascending, in which case one can show that $G_e$ is the centraliser of a finitely generated subgroup of $\pi_1(\GG)$;
        \item or $\GG$ has a single vertex and all its edge maps are isomorphisms, in which case $\pi_1(\GG) = H \rtimes F$ where $H \cong \ZZ^n$ and $F$ is a finitely generated free group---then, by analysing the kernel of the map $F \to \Aut(H)$, one can show that if $\Delta_\GG$ has infinite image, then $H$ is either self-centralising or the centraliser of a subgroup $\langle f_1,f_2 \rangle \leq F$.
    \end{itemize}
    In particular, in the latter two cases there exists a vertex or edge group $H$ of $\GG$ that coincides with the centraliser of a finite subset of $\pi_1(\GG)$, and so is $\LL$-quasiconvex with respect to any biautomatic structure $(X,\LL)$ on $\pi_1(\GG)$. It then follows from \cite[Theorem~1.2]{LearyMinasyan} that $\Delta_\GG$ has finite image, as required.
\end{rem}

\section{Non-positive curvature}

We now turn to the proof of Theorem \ref{thm:main}\ref{it:main-CAT0}. The argument is essentially the same as for the proof of Theorem 7.2 in \cite{LearyMinasyan}, but with different notation that allows us to work with a general graph of groups $\GG$ rather than just an HNN extension.

\begin{thm}\cite[Theorem 6.4($4'$)]{LearyMinasyan}\label{thm:innerproduct}
    Let $L$ be a finite-rank free abelian group acting properly by semisimple isometries on a CAT(0) space $X$.
    Then there is an inner product $\langle\cdot,\cdot\rangle_L$ on $L\otimes\QQ$, such that, for any isometry $\varphi$ of $X$ that commensurates $L$, the inner product $\langle\cdot,\cdot\rangle_L$ is preserved by the image of $\varphi$ in $\GL(L \otimes \QQ)$ (see Subsection \ref{subsec:commensurators}).
\end{thm}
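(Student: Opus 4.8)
The plan is to build the inner product $\langle\cdot,\cdot\rangle_L$ out of translation lengths on $X$, in three steps: (1) use the Flat Torus Theorem to see how $L$ moves points of $X$; (2) deduce that $\ell \mapsto |\ell|_X^2$ is a positive-definite quadratic form on $L$, and extend it over $\QQ$; (3) observe that translation length is preserved by conjugation by \emph{any} isometry of $X$, which forces the resulting inner product to be $\varphi$-invariant.

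For steps (1) and (2): write $k = \mathrm{rank}(L)$ and, for $\ell \in L$, let $|\ell|_X = \inf_{x \in X} d_X(x,\ell x)$ denote the translation length. Since $L \cong \ZZ^k$ acts properly by semisimple isometries, the Flat Torus Theorem \cite[Theorem~II.7.1]{BridsonHaefliger} gives a non-empty common minimal set $\mathrm{Min}(L) = \bigcap_{\ell \in L}\mathrm{Min}(\ell)$ that splits isometrically as $Y \times \EE^k$, with each $\ell \in L$ acting as $(\mathrm{id}_Y,\tau_\ell)$ for a Euclidean translation $\tau_\ell$, and with $|\ell|_X$ equal to the Euclidean translation length of $\tau_\ell$. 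Letting $v\colon L \to \RR^k$ send $\ell$ to the translation vector of $\tau_\ell$ (a group homomorphism), we get $|\ell|_X = \|v(\ell)\|$, so $q(\ell) \coloneqq |\ell|_X^2 = v(\ell)\cdot v(\ell)$ satisfies $q(m\ell) = m^2q(\ell)$ and has biadditive associated form $q(\ell_1+\ell_2) - q(\ell_1) - q(\ell_2) = 2\,v(\ell_1)\cdot v(\ell_2)$; thus $q$ is a quadratic form on $L$. It is positive definite: if $q(\ell) = 0$ with $\ell \neq 0$, then the semisimple isometry $\ell$ is elliptic, so $\langle\ell\rangle$ has a bounded orbit in $X$, contradicting properness. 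Being defined only via translation lengths, $q$ — and everything built from it — depends only on the action of $L$ on $X$, even though the decomposition $Y \times \EE^k$ itself need not be canonical. Finally, $q$ extends uniquely to a positive-definite quadratic form on $L \otimes \QQ$, whose polarisation is the desired inner product $\langle\cdot,\cdot\rangle_L$ (so that $\langle x,x\rangle_L = q(x)$).

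For step (3): since $\varphi$ commensurates $L$, the subgroup $L' \coloneqq L \cap \varphi^{-1}L\varphi$ has finite index in $L$, conjugation by $\varphi$ maps $L'$ into $L$, and the image of $\varphi$ in $\GL(L \otimes \QQ) \cong \Comm(L)$ is the $\QQ$-linear automorphism $\bar\varphi$ extending $\ell \mapsto \varphi\ell\varphi^{-1}$ on $L'$. The substitution $x = \varphi y$ gives, for every $\ell$, $|\varphi\ell\varphi^{-1}|_X = \inf_y d_X(\varphi y, \varphi\ell y) = \inf_y d_X(y, \ell y) = |\ell|_X$; hence $q \circ \bar\varphi = q$ on $L'$. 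Both $q$ and $q \circ \bar\varphi$ are quadratic forms on $L \otimes \QQ$ that agree on the $\QQ$-spanning subset $L'$, hence are equal, and polarising shows that $\bar\varphi$ preserves $\langle\cdot,\cdot\rangle_L$, as required.

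I expect the only substantial ingredient to be the Flat Torus Theorem invoked in step (1): it is precisely what turns the abstract hypothesis (a proper action by semisimple isometries) into an honest translation action on a Euclidean space, from which the quadratic-form property of $q$ is immediate. The rest is elementary, the two points needing a little care being the positive-definiteness of $q$ (which is where properness re-enters) and the fact that a quadratic form on $L \otimes \QQ$ is determined by its restriction to the finite-index subgroup $L'$.
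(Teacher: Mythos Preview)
The paper does not supply its own proof of this statement: Theorem~\ref{thm:innerproduct} is quoted verbatim from \cite[Theorem~6.4($4'$)]{LearyMinasyan} and used as a black box in the proof of Theorem~\ref{thm:main}\ref{it:main-CAT0}. So there is nothing in the present paper to compare your argument against.

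That said, your proof is correct and is essentially the argument one finds in the original source. The Flat Torus Theorem is indeed the right tool: it converts the abstract proper semisimple action of $L$ into a translation action on a Euclidean factor, whence $q(\ell) = |\ell|_X^2$ is visibly the pullback of the standard Euclidean quadratic form under a homomorphism $v\colon L \to \RR^k$. Your handling of positive-definiteness (properness rules out non-trivial elliptics) and of the extension to $L \otimes \QQ$ is fine, and the conjugation-invariance of translation length is exactly what makes the form $\varphi$-invariant. The only cosmetic point is that in the last step you could say directly that $L'$ spans $L \otimes \QQ$ over $\QQ$ (since it has finite index in $L$), so agreement of two quadratic forms on $L'$ forces agreement on $L \otimes \QQ$; you do say this, just slightly indirectly.
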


\begin{proof}[Proof of Theorem~\ref{thm:main}\ref{it:main-CAT0}]
First suppose that the image of $\Delta_\GG$ is conjugate in $\GL_n(\RR)$ into $\OO(n)$.
In particular, there exists $P\in\GL_n(\RR)$ with $P\Delta_\GG(\pi_1(\GG))P^{-1}\subset\OO(n)$. Consider the map $\delta_\GG\colon \pi_1(\GG) \to \AGL_n(\QQ)$ given by Lemma~\ref{lem:delta}, and let $\alpha\colon \AGL_n(\QQ) \to \AGL_n(\RR)$ and $\gamma_P\colon \AGL_n(\RR) \to \AGL_n(\RR)$ be the inclusion and conjugation by $(0,P)$, respectively. Then the map $\beta = \gamma_P \circ \alpha \circ \delta_\GG \colon \pi_1(\GG) \to \AGL_n(\RR)$ has its image contained in $\RR^n \rtimes \OO(n)$, i.e.\ the group of isometries of the Euclidean $n$-space $\EE^n$. Note that, since the restriction of $\delta_\GG$ to any vertex group $G_v$ is an injective map to the group $\QQ^n < \AGL_n(\QQ)$, the restriction of $\beta$ to $G_v$ gives a proper and cocompact action of $G_v$ on $\EE^n$ by translations.

Now define an isometric action of $\pi_1(\GG)$ on $\EE^n\times \TT_\GG$ (which is a $\CAT(0)$ space with the product metric), where $\pi_1(\GG)$ acts on the $\EE^n$ factor according to $\beta$ and on the $\TT_\GG$ factor according to the usual Bass--Serre tree action.
Observe that the action of $\pi_1(\GG)$ on $\EE^n\times \TT_\GG$ is proper and cocompact since each vertex stabiliser for $\TT_\GG$ is a conjugate of a vertex group $G_v$, and each vertex group acts properly and cocompactly on $\EE^n$.
Thus $\pi_1(\GG)$ is a CAT(0) group.

Now suppose that $\pi_1(\GG)$ is CAT(0).
Consider a vertex group $G_v$, and consider the modular homomorphism as a map $\Delta_\GG:\pi_1(\GG)\to \Comm(G_v) \cong \GL(G_v\otimes\QQ)\cong\GL_n(\QQ)$.
We know that $\pi_1(\GG)$ commensurates $G_v$, so Theorem \ref{thm:innerproduct} implies that the image of $\Delta_\GG$ preserves the inner product $\langle\cdot,\cdot\rangle_L$ on $G_v\otimes\QQ$.
Equivalently, considering the image of $\Delta_\GG$ as a subgroup of $\GL_n(\QQ)$, this implies that the image of $\Delta_\GG$ is conjugate in $GL_n(\RR)$ into $\OO(n)$.
\end{proof}

\bibliographystyle{amsalpha}
\bibliography{ref3}

\end{document}